\newtheorem{anyprop}{Anyprop}[section]
\newtheorem{theorem}[anyprop]{Theorem}
\newtheorem{lemma}[anyprop]{Lemma}
\newtheorem{corollary}[anyprop]{Corollary}
\theoremstyle{definition}
\newtheorem{example}[anyprop]{Example}
\newtheorem{remark}[anyprop]{Remark}
\newcommand{\NN}{\mathbb{N}}
\newcommand{\ZZ}{\mathbb{Z}}
\newcommand{\QQ}{\mathbb{Q}}
\newcommand{\FF}{\mathbb{F}}
\newcommand{\PP}{\mathbb{P}}
\newcommand  {\shC}     {\mathcal{C}}
\newcommand  {\shE}     {\mathcal{E}}
\newcommand  {\shF}     {\mathcal{F}}
\newcommand  {\shG}     {\mathcal{G}}
\newcommand  {\shM}     {\mathcal{M}}
\newcommand  {\shL}     {\mathcal{L}}
\newcommand  {\shS}     {\mathcal{S}}
\newcommand  {\shX}     {\mathcal{X}}
\newcommand  {\fom}     {\mathfrak{m}}
\newcommand  {\fop}     {\mathfrak{p}}
\newcommand  {\Char}    {\operatorname{char}}
\newcommand  {\et}      {{\text{\'{e}t}}}
\newcommand  {\length}  {\operatorname{length}}
\renewcommand{\O}       {\mathcal{O}}
\newcommand  {\Proj}    {\operatorname{Proj}}
\newcommand  {\ra}      {\rightarrow}
\newcommand  {\rk}    {\operatorname{rk}}
\newcommand  {\Spec}    {\operatorname{Spec}}
\newcommand  {\Syz}     {\operatorname{Syz}}
\newcommand{\komdots}{ , \ldots , }
\newcommand  {\cloalg}[1]{{ \overline {#1 \!\!}}}
\newcommand{\lto}{\longrightarrow}
\theoremstyle{remark}
\numberwithin{equation}{section}
\begin{document}
\title[An explicit example of Frobenius periodicity]
{An explicit example of Frobenius periodicity}

\author[Holger Brenner and Almar Kaid]{Holger Brenner and Almar Kaid}
\address{Universit\"at Osnabr\"uck, Fachbereich 6: Mathematik/Informatik, Albrechtstr. 28a,
49069 Osnabr\"uck, Germany}
\email{hbrenner@uni-osnabrueck.de and akaid@uni-osnabrueck.de}


\subjclass{}



\begin{abstract}
In this paper we show that the restriction of the cotangent bundle $\Omega_{\PP^2}$ of the projective plane to a Fermat curve $C$ of degree $d$ in characteristic $p \equiv -1 \mod 2d$ is, up to tensoration with a certain line bundle, isomorphic to its Frobenius pull-back. This leads to a Frobenius periodicity $F^*(\shE) \cong \shE$ on the Fermat curve of degree $2d$, where $\shE= \Syz(U^2,V^2,W^2)(3)$.
\end{abstract}

\maketitle

\noindent Mathematical Subject Classification (2010): primary: 14H60;\\ secondary: 13D40

\smallskip

\noindent Keywords: semistable vector bundle, syzygy bundle, Fermat curve, Frobenius periodicity, Hilbert-Kunz theory

\section{Introduction}

Let $C$ be a smooth projective curve defined over a field $K$ of characteristic $p>0$. If $F$ denotes the absolute Frobenius morphism
$F: C \ra C$, then we say that a vector bundle $\shE$ on $C$ admits an \emph{$(s,t)$-Frobenius periodicity} if there are natural numbers
$s$ and $t$, $t>s$, such that ${F^t}^*(\shE) \cong {F^s}^*(\shE)$. Of particular interest are vector bundles which admit a $(0,t)$-Frobenius periodicity, i.e., $F^{t*}(\shE) \cong \shE$. By the classical theorem of H. Lange and U. Stuhler \cite[Satz 1.4]{langestuhler} such a bundle is \'{e}tale trivializable, i.e., there exists an \'{e}tale covering $f:D \ra C$ such that $f^*(\shE) \cong \O_D^r$ where $r = \rk(\shE)$. Hence a vector bundle $\shE$ with a $(0,t)$-Frobenius periodicity comes from a (continuous) representation $\rho: \pi_1^{ \et }(C) \ra GL_r(K)$ of the \'{e}tale fundamental group $\pi_1^{\et}(C)$ of the curve (see [ibid., Proposition 1.2]). We recall that a vector bundle which can be trivialized under an \'etale covering does not necessarily admit a Frobenius periodicity (see \cite[Example 2.10]{brennerkaiddeepfrobeniusdescent} or \cite[Example below Theorem 1.1]{biswasducrohetlangestuhler}).

Quasicoherent modules over a scheme of positive characteristic allowing a Frobenius periodicity appear under several names ($\shF$-finite modules, unit ${\mathcal O}_X[F]$-modules) and from several perspectives ($D$-modules, local cohomology,  Cartier modules, constructible sheaves on the \'{e}tale site, Riemann-Hilbert correspondence) in the literature. Beside \cite{langestuhler} we mention work of Katz \cite[Proposition 4.1.1]{katzmodular}, Lyubeznik \cite{lyubeznikfmodules}, Emerton and Kisin \cite{emertonkisin}, Blickle \cite{blickleminimalgamma} and Blickle and B\"ockle \cite{blickleböcklecartier}.

Despite the importance of vector bundles having a Frobenius periodicity, it is not easy to write down non-trivial explicit examples. For a line bundle the condition becomes ${F^t}^* \shL = \shL^q = \shL$ (with $q=p^t$), hence $\shL$ must be a torsion element in ${\rm Pic} \ C$ of order dividing $q-1$, and so any line bundle of order prime to $p$ gives an example. For higher rank, a necessary condition is that the bundle $\shS$ has degree $0$ and is semistable. By the periodicity it follows that the bundle is in fact strongly semistable, meaning that ${F^t}^*(\shE)$ is semistable for all $t \geq 0$ (this notion goes back to Miyaoka, see \cite[Section 5]{miyaokachern}). On the other hand, if the curve $C$ and the bundle $\shE$ are defined over a finite field and $\shE$ is strongly semistable of degree $0$, then there is necessarily a $(s,t)$-Frobenius periodicity due to the fact that the number of isomorphism classes of semistable vector bundles of fixed rank and degree is finite (\cite[Satz 1.9]{langestuhler}). Nevertheless, it is still hard to detect the periodicity $s$ and $t$. If we have an extension $0\rightarrow \O_C \rightarrow \shS \rightarrow \O \rightarrow 0$ given by $c \in H^1(C,\O_C)$, then its Frobenius pull-back is given by the class $F^*(c)$, and one can get (semistable, but not stable) examples by looking at the Frobenius action on $H^1(C,\O_C)$.

In this note we provide a down to earth example of a stable rank-$2$ vector bundle $\shE$ on a suitable Fermat curve admitting a $(0,1)$-Frobenius periodicity. Moreover, this periodicity only depends on a congruence condition of the characteristic of the base field, not on its algebraic structure. Our main tools will be results of P. Monsky on the Hilbert-Kunz multiplicity of Fermat hypersurface rings and the geometric approach to Hilbert-Kunz theory developed independently by the first author in \cite{brennerhilbertkunz} and V. Trivedi in \cite{trivedihilbertkunz}.

The results of this article are contained in Chapter $4$ of the PhD-thesis \cite{kaiddissertation} of the second author.
Related results on the free resolution of Frobenius powers on a Fermat ring can be found in the preprint \cite{kustinrahmativraciu}.
We thank Manuel Blickle, Aldo Conca, Neil Epstein, Andrew Kustin and Axel St\"abler for useful discussions. We also thank the referee for useful remarks.

\section{A lemma on global sections}

To begin with, we recall the notions of a \emph{syzygy bundle}. Let $K$ be a field and let $R$ be a normal standard-graded $K$-domain of dimension $d \geq 2$. Then homogeneous $R_+$-primary elements $f_1 \komdots f_n$ (i.e., $\sqrt{(f_1 \komdots f_n)}=R_+$)
of degrees $d_1 \komdots d_n$ define a short exact (presenting) sequence 
$$0 \lto \Syz(f_1 \komdots f_n) \lto \bigoplus_{i=1}^n \O_X(-d_i) \stackrel{f_1 \komdots f_n}\lto \O_X \lto 0$$
on the projective scheme $X= \Proj R$. The kernel $\Syz(f_1 \komdots f_n)$ is locally free and is called the \emph{syzygy bundle} for the elements $f_1 \komdots f_n$.

In this article we only deal with restrictions of syzygy bundles of the form $\Syz(X^a,Y^a,Z^a)$, $a \in \NN \setminus \{0\}$, on $\PP^2=\Proj K[X,Y,Z]$ to a plane curve $C$. Our main interest will be the case $a=1$ which corresponds via the Euler sequence to the cotangent bundle $\Omega_{\PP^2}|_C$ on the projective plane. Since there will be no confusion in the sequel we also denote the restricted bundle on the curve by $\Syz(X^a,Y^a,Z^a)$.

Let $K$ be a field and consider a smooth plane curve of the form $$V_+(Z^d -P(X,Y)) \subset \PP^2= \Proj K[X,Y,Z],$$ where $P(X,Y) \in K[X,Y]$ denotes a homogeneous polynomial of degree $d$. In this situation we can compute global sections of a rank-$2$ syzygy bundle of the form $\Syz(X^{a_1},Y^{a_2},Z^{a_3})$ by the following lemma which is a slight improvement over \cite[Lemma 1]{brennermiyaoka}. It relates the sheaves $\Syz(X^{a_1},Y^{a_2},Z^{a_3})$ with the sheaves $\Syz(X^{a_1},Y^{a_2},P(X,Y)^k)$ for suitable $k$ which come from $\PP^1$ via the Noetherian normalization $C=V_+(Z^d-P(X,Y)) \ra \PP^1=\Proj K[X,Y]$. We will use this result several times in the proof of our main theorem in the next section.

\begin{lemma}
\label{splittinglemma}
Let $K$ be a field and let $P(X,Y) \in K[X,Y]$ be a homogeneous
polynomial of degree $d$. Suppose the plane curve
$$C:=\Proj(K[X,Y,Z]/(Z^d-P(X,Y)))$$
is smooth. Further, fix
$a_1,a_2,a_3 \in \NN_+$ and write $a_3=dk+t$ with $0 \leq t < d$. Then we
have for every $m \in \ZZ$ a surjective sheaf morphism
\begin{eqnarray*}
\varphi_m:\shS_k(m-t) \oplus \shS_{k+1}(m) &\lto&
\Syz(X^{a_1},Y^{a_2},Z^{a_3})(m)\\
(f_1,f_2,f_3),(g_1,g_2,g_3) &\longmapsto&
(Z^tf_1+g_1,Z^tf_2+g_2,f_3+Z^{d-t}g_3) \, ,
\end{eqnarray*}
where
$\shS_i:=\Syz(X^{a_1},Y^{a_2},P(X,Y)^i)$ for $i \geq 0$. Moreover,
the corresponding map on global sections
$$\Gamma(C,\shS_k(m-t)) \oplus \Gamma(C,\shS_{k+1}(m)) \lto
\Gamma(C,\Syz(X^{a_1},Y^{a_2},Z^{a_3})(m))$$ is surjective for
every $m \in \ZZ$.
\end{lemma}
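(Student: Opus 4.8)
The key observation is that the curve $C = V_+(Z^d - P(X,Y))$ sits as a degree-$d$ cover $\pi: C \to \PP^1 = \Proj K[X,Y]$, and on $C$ the section ring carries the relation $Z^d = P(X,Y)$. The plan is to first verify that the map $\varphi_m$ is well-defined, i.e., that it lands in $\Syz(X^{a_1},Y^{a_2},Z^{a_3})$: one checks directly that if $(f_1,f_2,f_3) \in \shS_k = \Syz(X^{a_1},Y^{a_2},P^k)$ and $(g_1,g_2,g_3) \in \shS_{k+1}$, then using $X^{a_1}f_1 + Y^{a_2}f_2 + P^k f_3 = 0$ and $X^{a_1}g_1 + Y^{a_2}g_2 + P^{k+1}g_3 = 0$ together with $a_3 = dk+t$, $Z^d = P$ one gets
\begin{eqnarray*}
X^{a_1}(Z^tf_1 + g_1) + Y^{a_2}(Z^tf_2 + g_2) + Z^{a_3}(f_3 + Z^{d-t}g_3)
&=& Z^t(X^{a_1}f_1 + Y^{a_2}f_2) + Z^{dk+t}f_3 \\
&& {}+ (X^{a_1}g_1 + Y^{a_2}g_2) + Z^{dk+d}g_3,
\end{eqnarray*}
and the first three terms cancel since $Z^{dk+t}f_3 = Z^t P^k f_3$, while the last three cancel since $Z^{dk+d}g_3 = P^{k+1}g_3$. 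One also has to account for the twist: the natural degree of the second component of $\shS_k$ is $d_3 = dk$, hence $Z^t f_1$ raises degree by $t$, which matches the shift $\shS_k(m-t) \to (\cdot)(m)$; similarly the $\shS_{k+1}$ component needs the $Z^{d-t}$ correction in the third slot. This is the bookkeeping part, and it is routine.

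The main point is \emph{surjectivity} of $\varphi_m$ as a sheaf morphism. Since both source and target are locally free sheaves on the curve $C$, surjectivity can be checked on stalks, or equivalently after localizing at each point. Away from the ramification locus $Z = 0$ the element $Z$ is a unit (locally), and given any syzygy $(h_1,h_2,h_3)$ of $(X^{a_1},Y^{a_2},Z^{a_3})$ one can write $h_3 = Z^t f_3 + Z^{?}(\cdots)$; more transparently, I would argue that the cokernel of $\varphi_m$ is a coherent sheaf supported in dimension $0$ (if anywhere), and then kill it using the next step. The cleanest route is: let $\shQ_m = \operatorname{coker}(\varphi_m)$; since $\Syz(X^{a_1},Y^{a_2},Z^{a_3})$ is locally free of rank $2$ and the source has rank $4$, a rank count plus the well-definedness shows $\shQ_m$ is torsion, hence supported at finitely many points. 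I expect $\shQ_m = 0$ can be seen directly by splitting off the $Z$-adic part of an arbitrary local syzygy: given $(h_1,h_2,h_3)$ with $X^{a_1}h_1 + Y^{a_2}h_2 + Z^{a_3}h_3 = 0$, write each $h_i$ as $h_i = Z^t f_i + g_i$ where $g_i$ collects the terms of $Z$-degree $< t$ and $f_i$ the rest divided by $Z^t$ (using the basis $1, Z, \dots, Z^{d-1}$ of $\O_C$ over $\O_{\PP^1}$); one then checks that $(f_1,f_2,f_3)$ and $(g_1,g_2,g_3)$ are forced to be syzygies of the claimed type, after possibly reshuffling the $Z^{d-t}g_3$ term. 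The hard part will be organizing this decomposition cleanly so that both resulting tuples genuinely lie in $\shS_k$ and $\shS_{k+1}$ respectively, using only the relation $Z^d = P$ — this is essentially the content of \cite[Lemma 1]{brennermiyaoka} and its "slight improvement" presumably handles the twist $m$ uniformly rather than for a single value.

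Finally, the statement about global sections. The surjection on global sections does \emph{not} follow formally from surjectivity of the sheaf map (that would require $H^1$ of the kernel to vanish). Instead I would argue as follows: the kernel $\shK_m$ of $\varphi_m$ is a locally free sheaf, and one identifies it explicitly. From the two presenting sequences for $\shS_k$ and $\shS_{k+1}$ one sees that $\shS_k(m-t) \oplus \shS_{k+1}(m)$ maps onto the target with kernel isomorphic to a twist of $\Syz$ of the "merged" system, or more simply to $\O_C(-\text{something})$; the point is that $\shK_m$ should turn out to be a line bundle of sufficiently negative degree (or more precisely one whose $H^1$ we can control — likely $\shK_m \cong \shS_{k+1}(m - a_3)$ or similar via the obvious relation $Z^{a_3} = Z^t \cdot Z^{dk} = Z^t P^k$). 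If $H^1(C, \shK_m) = 0$ for all $m$, we are done; if not, one instead notes that the two Noether-normalization sequences $0 \to \shS_i \to \bigoplus \O(-d_j) \to \O \to 0$ split off enough global sections — explicitly, $\Gamma(C, \shS_{k+1}(m))$ surjects onto the relevant piece because $\PP^1$-syzygy bundles and their pushforwards to $C$ have easily computable cohomology. So the endgame is a cohomology-vanishing or direct-construction argument on $\PP^1$-level syzygy bundles, and I expect that to be the only place where the smoothness hypothesis on $C$ and the Fermat-type structure really get used.
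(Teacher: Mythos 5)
Your well-definedness check agrees with the paper, but the heart of the lemma is the surjectivity on global sections for \emph{every} $m$, and that is precisely where your proposal has a genuine gap. You correctly note that global surjectivity does not follow formally from sheaf surjectivity, but the repair you sketch (identify the kernel $\shK_m$ and kill $H^1(C,\shK_m)$, or fall back on cohomology of $\PP^1$-syzygy bundles) is not carried out and would not work as stated: the source has rank $4$ and the target rank $2$, so $\shK_m$ is a rank-$2$ bundle, not a line bundle, and there is no reason its $H^1$ should vanish for all twists --- indeed the lemma is applied later in exactly the critical degrees (degree $0$ and slightly negative twists of $\Syz(X^p,Y^p,Z^p)$) where no such vanishing is available. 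Nothing cohomological, and nothing about smoothness or the Fermat shape of $P$, is actually needed.

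The correct argument runs in the opposite order and is purely elementary. Prove the global statement first: a global section $(F,G,H)$ of $\Syz(X^{a_1},Y^{a_2},Z^{a_3})(m)$ is decomposed along the $K[X,Y]$-basis $1,Z,\dots,Z^{d-1}$ of the coordinate ring, $F=\sum_i F_iZ^i$ etc.; since $Z^{a_3}=P(X,Y)^kZ^t$, the syzygy relation splits into the equations $F_iZ^iX^{a_1}+G_iZ^iY^{a_2}+H_{j(i)}Z^{j(i)}Z^{a_3}=0$ with $j(i)\equiv i-t \bmod d$. For $i\ge t$ one factors out $Z^t$ and obtains an element of $\Gamma(C,\shS_k(m-t))$ mapping to that summand; for $i<t$ one factors out $Z^{i}$ and absorbs $Z^{d}=P$ into $P^{k+1}$, obtaining an element of $\Gamma(C,\shS_{k+1}(m))$ whose third slot is $Z^{i}H_{j(i)}$. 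Note this grouping by the residue class of the $Z$-exponent is finer than your truncation $h_i=Z^tf_i+g_i$: the components of $H$ are paired with those of $F,G$ via $j(i)$, not by cutting $H$ at $Z$-degree $t$, and your naive split is not componentwise syzygy-preserving. Once surjectivity on global sections is known in every degree, the sheaf surjectivity is automatic, since twisting $\varphi_m$ gives $\varphi_{m+l}$ and the target is globally generated for $l\gg 0$, so the image subsheaf equals the target; no stalkwise analysis or torsion-cokernel argument is required, and the ``hard part'' you flag disappears.
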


\begin{proof}
We consider the sheaf morphism
$$ \xymatrix {&\O_C(m-t-a_1) \oplus
\O_C(m-t-a_2) \oplus \O_C(m-t-dk) \ar@{}@<-1.5ex>[d]^{\bigoplus} \\
              &\O_C(m-a_1) \oplus \O_C(m-a_2)\oplus \O_C(m-dk-d)
              \ar[d]^{\widetilde{\varphi_m}}
              &\\
              &\O_C(m-a_1) \oplus
\O_C(m-a_2)\oplus \O_C(m-a_3) &}
              $$
which maps $(s_1,s_2,s_3,s_4,s_5,s_6) \mapsto
(Z^ts_1+s_4,Z^ts_2+s_5,s_3+Z^{d-t}s_6)$. Clearly,
$\widetilde{\varphi_m}$ maps $\shS_k(m-t)\oplus\shS_{k+1}(m)$ into
$\Syz(X^{a_1},Y^{a_2},Z^{a_3})(m)$. Hence, the map $\varphi_m$ is
obtained from $\widetilde{\varphi_m}$ via restriction to
$\shS_k(m-t)\oplus\shS_{k+1}(m)$ and is therefore a morphism of
sheaves. It is enough to prove that $\varphi_m$ is surjective on global sections for all $m$. Let $s:=(F,G,H) \in
\Gamma(C,\Syz(X^{a_1},Y^{a_2},Z^{a_3})(m))$ be a non-trivial global section, i.e.,
$FX^{a_1}+GY^{a_2}+HZ^{a_3}=0$ and $\deg(F) + a_1 = \deg(G) + a_2
=\deg(H) + a_3 =m$. We write
\begin{eqnarray*}
F&=&F_0 + F_1 Z + F_2 Z^2 + \ldots + F_{d-1}
Z^{d-1}\\
G&=&G_0 + G_1 Z + G_2 Z^2 + \ldots + G_{d-1}
Z^{d-1}\\
H&=&H_0 + H_1 Z + H_2 Z^2 + \ldots + H_{d-1} Z^{d-1}
\end{eqnarray*}
with $F_i,G_i,H_i \in K[X,Y]$ for $i=0 \komdots d-1$. We have
$Z^{a_3}=Z^{dk+t}=(Z^d)^kZ^t=P(X,Y)^k Z^t$. Since $s$ is a syzygy we obtain (by considering the $K[X,Y]$-component corresponding to $Z^{i}$) a system of equations
$$F_iZ^i X^{a_1}+G_iZ^iY^{a_2}+H_{j(i)}Z^{j(i)}Z^{a_3}=0,$$
where $j(i) \equiv i-t \!\! \mod d$. Thus $s=(F,G,H)$ is the sum of the
syzygies $$ s_i:=(F_iZ^i,G_iZ^i,H_{j(i)}Z^{j(i)}) \in
\Gamma(C,\Syz(X^{a_1},Y^{a_2},Z^{a_3})(m)).$$
We show that each of these summands does either come from $\Gamma(C,\shS_{k+1}(m))$ or from $\Gamma(C,\shS_k(m-t))$. We fix one equation
$$F_{i_0}Z^{i_0} X^{a_1}+G_{i_0}Z^{i_0}Y^{a_2}+H_{j(i_0)}Z^{j(i_0)}Z^{a_3}=0$$
with $j(i_0) \equiv i_0-t \!\! \mod d$. First, we treat the case where $i_0 <t$, hence $j(i_0)=i_0 -t+d$. Factoring out $Z^{i_0}$ and replacing $Z^{a_3}$ by $P(X,Y)^k Z^t$ yields
\begin{eqnarray*}
0 &=& Z^{i_0}(F_{i_0}X^{a_1}+G_{i_0}Y^{a_2}+H_{j(i_0)}Z^{d-t}P(X,Y)^kZ^t)\\
&=& Z^{i_0}(F_{i_0}X^{a_1}+G_{i_0}Y^{a_2}+H_{j(i_0)}
P(X,Y)^{k+1}).
\end{eqnarray*}
Hence $g_{i_0}:=(Z^{i_0}F_{i_0},Z^{i_0}G_{i_0},Z^{i_0}H_{j(i_0)}) \in \Gamma(C,\shS_{k+1}(m))$ and $\varphi_m(g_{i_0})=s_{i_0}$.
Next, we consider the case $i_0 \geq t$, hence $j(i_0)=i_0-t$. We factor out
$Z^t$ and replace $Z^{a_3}$. This gives
\begin{eqnarray*}
0 &=& F_{i_0} Z^{j(i_0)+t}X^{a_1}+ G_{i_0}Z^{j(i_0)+t}Y^{a_2}+H_{j(i_0)}Z^{j(i_0)} P(X,Y)^k Z^t\\
&=& Z^t(F_{i_0}Z^{j(i_0)}X^{a_1}+G_{i_0}Z^{j(i_0)}Y^{a_2}+H_{j(i_0)}Z^{j(i_0)}P(X,Y)^k).
\end{eqnarray*}
Hence we have $h_{i_0}:=(F_{i_0}Z^{j(i_0)},G_{i_0}Z^{j(i_0)},H_{j(i_0)}Z^{j(i_0)}) \in \Gamma(C,\shS_k(m-t))$ and $\varphi_m(h_{i_0})=s_{i_0}$.
\end{proof}

\begin{remark}
\label{summandinjectivity}
It is easy to see that the morphisms $\varphi_m$, $m \in \ZZ$, are injective on both summands, i.e., the induced mappings
$$\shS_k(m-t) \lto \Syz(X^{a_1},Y^{a_2},Z^{a_3})(m), ~(f_1,f_2,f_3) \longmapsto (Z^tf_1,Z^tf_2,f_3)$$
and
$$\shS_{k+1}(m) \lto \Syz(X^{a_1},Y^{a_2},Z^{a_3})(m), ~(g_1,g_2,g_3) \longmapsto (g_1,g_2,Z^{d-t}g_3)$$
are both injective.
\end{remark}

\begin{remark}
\label{splittingonp1remark}
The sheaves $\shS_k$ and $\shS_{k+1}$ are the pull-backs $$\pi^*(\Syz_{\PP^1}(X^{a_1},Y^{a_2},P(X,Y)^k)) \mbox{ and } \pi^*(\Syz_{\PP^1}(X^{a_1},Y^{a_2},P(X,Y)^{k+1}))$$ respectively under the Noetherian normalization $\pi: C \ra \PP^1= \Proj K[X,Y]$. In particular, $\shS_k$ and $\shS_{k+1}$ split as a direct sum of line bundles. If $t=0$ we have $\Syz_C(X^{a_1},Y^{a_2},Z^{a_3}) \cong \Syz_C(X^{a_1},Y^{a_2},P(X,Y)^k)$ and the bundle is therefore already defined
on $\PP^1$.
\end{remark}

\section{Frobenius periodicity up to a twist}

Let $C$ be a smooth projective curve defined over a field of positive characteristic. It is a well-known fact that the pull-back of
a semistable vector bundle under the (absolute) Frobenius morphism is in general not semistable anymore; see for instance the example of Serre in \cite[Example 3.2]{hartshorneamplecurve}. Using syzygy bundles on Fermat curves one can produce fairly easy examples of this phenomenon.

\begin{example}
\label{destabilizingexample}
Let $C:=\Proj(\cloalg{\FF_3}\,\,[X,Y,Z]/(X^4+Y^4-Z^4)$ be the Fermat quartic in characteristic $3$. The cotangent bundle $\Omega_{\PP^2}$ is stable on the projective plane (see for instance \cite[Corollary 6.4]{brennerlookingstable})
and so is the restriction $\Omega_{\PP^2}|_C = \Syz(X,Y,Z)$ by Langer's restriction theorem \cite[Theorem 2.19]{langersurvey}.
Its Frobenius pull-back is the syzygy bundle $\Syz(X^3,Y^3,Z^3)$. The curve equation yields the relation $X \cdot X^3 + Y \cdot Y^3 - Z \cdot Z^3 = 0$
and thus we obtain a non-trivial global section of $(F^*(\Omega_{\PP^2}|_C))(4)$. But the degree of this bundle equals $-4$ and therefore $F^*(\Omega_{\PP^2}|_C)$ is not semistable.
\end{example}

Before we state our main theorem, we prove the following Lemma separately.

\begin{lemma}
\label{stronglysemistablerestriction}
Let $d \geq 2$ be an integer and let $K$ be a field of characteristic $p \equiv - 1 \!\! \mod 2d$. Then
$\Omega_{\PP^2}|_C$ is strongly semistable on the Fermat curve $C := \Proj(K[X,Y,Z]/(X^d+Y^d-Z^d))$.
\end{lemma}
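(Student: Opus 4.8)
The plan is to show that $\Omega_{\PP^2}|_C$ is semistable and that this semistability is preserved by all Frobenius pull-backs, by identifying each Frobenius power $F^{e*}(\Omega_{\PP^2}|_C) = \Syz(X^q,Y^q,Z^q)$ with $q=p^e$ as a syzygy bundle and bounding the degrees of its sub-line-bundles via the Splitting Lemma \ref{splittinglemma}. Since $\Omega_{\PP^2}|_C$ has degree $-3\cdot\deg C = -3d$ on the Fermat curve (via the Euler sequence $0\to\Omega_{\PP^2}\to\O(-1)^3\to\O\to 0$), and $F^{e*}$ multiplies degrees of sub-bundles by $q$, a rank-$2$ bundle $\shF$ of slope $\mu$ is semistable iff no invertible subsheaf $\shL\hookrightarrow\shF$ has $\deg\shL > \mu$. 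So the task reduces to: for every $e$ and every line bundle $\shN$ with a nonzero map $\shN\to\Syz(X^q,Y^q,Z^q)$ — equivalently, after twisting, for every $m$ with $\Gamma(C,\Syz(X^q,Y^q,Z^q)(m))\ne 0$ — to show $m$ is large enough that $\deg\shN = \deg\O_C(m) \cdot(\text{something})$... more precisely, $\deg\Syz(X^q,Y^q,Z^q) = -3qd$ and we must show that a nonzero section in degree $m$ forces $-3qd \leq 2\cdot(\text{the relevant slope bound})$; concretely the bundle $\Syz(X^q,Y^q,Z^q)(m)$ has a section only if $m$ is not too small, and semistability is exactly the statement that the smallest such $m$ is $\geq \lceil 3q/2\rceil$ or so.

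The key step is therefore to compute, for the Fermat polynomial $P(X,Y)=Z^d=X^d+Y^d$, the minimal twist $m$ such that $\Syz(X^q,Y^q,Z^q)(m)$ acquires a global section, and to do this by descending to $\PP^1$. Writing $q = dk+t$ with $0\le t<d$, Lemma \ref{splittinglemma} gives a surjection on global sections from $\Gamma(C,\shS_k(m-t))\oplus\Gamma(C,\shS_{k+1}(m))$ where $\shS_i = \pi^*\Syz_{\PP^1}(X^q,Y^q,(X^d+Y^d)^i)$. By Remark \ref{splittingonp1remark} each $\shS_i$ is pulled back from $\PP^1$ and splits as a sum of line bundles, and the splitting type of $\Syz_{\PP^1}(X^q,Y^q,(X^d+Y^d)^i)$ is computable: it is a rank-$2$ bundle on $\PP^1$ of degree $-2q-di$, and its splitting type $\O(-a)\oplus\O(-b)$ with $a+b = 2q+di$ is governed by the greatest common divisor structure / the semistability gap of the three exponents $q, q, di$. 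This is exactly where Monsky's computations and the structure of $X^q,Y^q$ versus $(X^d+Y^d)^i$ enter, and it is the main obstacle: I must pin down $a$ and $b$ precisely (up to the congruence $p\equiv-1\bmod 2d$, which makes $q\equiv -1 \bmod 2d$ and hence controls $t$ and the parity of $k$), and then translate the $\PP^1$-splitting, after pulling back along the degree-$d$ map $\pi$, into a degree bound on $C$.

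Once the splitting types of $\shS_k$ and $\shS_{k+1}$ on $\PP^1$ are known, the proof concludes as follows. Pulling back a line bundle $\O_{\PP^1}(n)$ along $\pi:C\to\PP^1$ gives $\pi^*\O_{\PP^1}(n)$ of degree $dn$ on $C$; hence $\Gamma(C,\shS_k(m-t))\ne0$ forces $m-t \geq$ (the larger of the two exponents in the splitting of $\shS_k$, call it $a_k$), and similarly $\Gamma(C,\shS_{k+1}(m))\ne0$ forces $m\geq a_{k+1}$. Combining with the surjectivity in Lemma \ref{splittinglemma}, the minimal $m$ with $\Gamma(C,\Syz(X^q,Y^q,Z^q)(m))\ne 0$ is $\min(a_k+t,\ a_{k+1})$, and one checks — using $q\equiv -1\bmod 2d$ — that this quantity is $\geq \lceil 3q/2\rceil$, i.e. that $2\cdot(-m)\leq -3qd$ after accounting for the $\pi^*$ degree factor, which is precisely the inequality expressing that no destabilizing sub-line-bundle of $\Syz(X^q,Y^q,Z^q)=F^{e*}(\Omega_{\PP^2}|_C)$ exists. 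Since this holds for all $e\geq 0$ simultaneously (the congruence condition is on $p$, hence inherited by all $q=p^e$), $\Omega_{\PP^2}|_C$ is strongly semistable. I expect the numerology in this last inequality to be delicate but mechanical; the genuinely substantive point is the explicit splitting type on $\PP^1$, for which Monsky's analysis of the Fermat situation is the essential input.
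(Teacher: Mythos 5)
Your plan is a genuinely different route from the paper's, and as it stands it has two real gaps. The paper's proof is essentially two lines: by Monsky's trinomial theorem the Hilbert--Kunz multiplicity of $K[X,Y,Z]/(X^d+Y^d-Z^d)$ equals $\frac{3d}{4}$ when $p\equiv -1 \bmod 2d$, and by the Brenner/Trivedi correspondence (\cite[Corollary 4.6]{brennerhilbertkunz}) this minimal value is \emph{equivalent} to strong semistability of $\Omega_{\PP^2}|_C$. Your proposal instead wants to compute the splitting types of $\Syz_{\PP^1}(X^q,Y^q,(X^d+Y^d)^i)$ for $i=k,k+1$ and deduce semistability of each $\Syz(X^q,Y^q,Z^q)$ by degree bounds. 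The first gap is that this splitting-type computation --- which you yourself call the substantive point --- is only promised, not performed, and you cannot borrow it from the paper: there the analogous splittings ($\shS_k(\frac{3p+1}{2}-t)\cong \O_C(-d+2)\oplus\O_C$, $\shS_{k+1}(\frac{3p+1}{2})\cong\O_C^2$, in Step 1 of Theorem \ref{cotangentfermatrepetition}) are derived \emph{from} the semistability you are trying to prove, so importing them is circular. Monsky's theorem cited in the paper computes Hilbert--Kunz multiplicities, not these splitting types; an independent computation (e.g.\ via Han--Monsky syzygy-gap techniques) would be a substantial separate piece of work, and note also that your uniformity claim fails: $p\equiv-1\bmod 2d$ gives $q=p^e\equiv(-1)^e\bmod 2d$, so even Frobenius powers fall into the $q\equiv 1$ case, which the paper's own remark after the theorem indicates behaves differently.

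The second gap is the concluding inference. Knowing that $\Gamma(C,\Syz(X^q,Y^q,Z^q)(m))=0$ for all $m\le\frac{3q-1}{2}$ only excludes destabilizing subsheaves of the special form $\O_C(-m)$; a destabilizing line subbundle $\shL$ need not be a twist of $\O_C$, since $\operatorname{Pic}(C)$ is large for $d\ge 3$. The standard repair --- twist $\shL$ until its degree reaches the genus $g=\frac{(d-1)(d-2)}{2}$ to force a section, hence a section of $\Syz(X^q,Y^q,Z^q)(m)$ --- only produces a section in some twist $m$ up to roughly $\frac{3q}{2}+\frac{g}{d}$, and (with the correct splitting types) sections genuinely exist already at $m=\frac{3q+1}{2}$; so for every $d\ge 3$ a hypothetical subbundle of degree just above the slope $-\frac{3qd}{2}$ is not detected, and no contradiction arises. (A quotient-style argument using the sheaf surjection of Lemma \ref{splittinglemma} fails for the same reason: one summand of $\shS_k(-t)\oplus\shS_{k+1}$ has degree strictly below the slope, and discarding it is exactly the content of Steps 2--3 of the theorem's proof, which again use semistability as input.) Also, a small slip: it is the \emph{smaller} exponent in the splitting $\O(-a)\oplus\O(-b)$, not the larger, that governs when sections of the twists first appear. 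In short, the approach as outlined cannot close, and the Hilbert--Kunz argument (or some replacement for it) is not an optional shortcut but the essential input.
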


\begin{proof}
We use Hilbert-Kunz theory and its geometric interpretation developed in \cite{brennerhilbertkunz} and \cite{trivedihilbertkunz}.
The Hilbert-Kunz multiplicity $e_{HK}(R)$ of the homogeneous coordinate ring
$R:=K[X,Y,Z]/(X^d+Y^d-Z^d)$ of the Fermat curve equals $\frac{3d}{4}$ in characteristic $p \equiv - 1 \!\! \mod 2d$ by Monsky's result \cite[Theorem 2.3]{monskytrinomial}. By \cite[Corollary 4.6]{brennerhilbertkunz} this is equivalent to the strong semistability of $\Omega_{\PP^2}|_C$ in these characteristics.
\end{proof}

\begin{remark}
Note that for $d=1$ we have $C \cong \PP^1$ and $\Omega_{\PP^2}|_C \cong \O_C(-2) \oplus \O_C(-1)$, i.e., $\Omega_{\PP^2}|_C$ is not even semistable.
For a general characterization of strong semistability of $\Omega_{\PP^2}|_C$ on the Fermat curve of degree $d$ depending on the characteristic of the
base field see \cite[Chapter 4]{kaiddissertation}. The restriction of $\Omega_{\PP^2} $ to every smooth projective curve of degree $\geq 7$
is stable by Langer's restriction theorem \cite[Theorem 2.19]{langersurvey}.
\end{remark}

\begin{theorem}
\label{cotangentfermatrepetition}
Let $d \geq 2$ be an integer and let $K$ be a field of characteristic $p \equiv - 1 \!\! \mod 2d$. Then $\shE:=\Omega_{\PP^2}|_C$ is strongly semistable on the Fermat curve $C := \Proj(K[X,Y,Z]/(X^d+Y^d-Z^d))$ and $$F^*(\shE) \cong \shE(-\frac{3(p-1)}{2}).$$
\end{theorem}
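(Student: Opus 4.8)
Since $\shE = \Omega_{\PP^2}|_C = \Syz_C(X,Y,Z)$, its Frobenius pull-back is again a syzygy bundle, $F^*\shE = \Syz_C(X^p,Y^p,Z^p)$, and strong semistability is precisely Lemma~\ref{stronglysemistablerestriction}; it is inherited by $F^*\shE$ since $F$ is finite. From the presenting sequences and $\deg_C\O_C(1)=d$ one computes $\deg F^*\shE = -3dp = \deg\bigl(\shE(-\tfrac{3(p-1)}{2})\bigr)$ and $\det F^*\shE = \O_C(-3p) = \det\bigl(\shE(-\tfrac{3(p-1)}{2})\bigr)$ (note $\tfrac{3(p-1)}{2}\in\ZZ$, as $p$ is odd). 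So it suffices to exhibit an isomorphism between the two semistable rank-$2$ bundles $F^*\shE$ and $\shE(-\tfrac{3(p-1)}{2})$, which have the same slope $-\tfrac{3dp}{2}$.

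The plan is to put both bundles over $\PP^1$ via Lemma~\ref{splittinglemma} and compare them there. Writing $p+1=2dm$ (the arithmetic content of $p\equiv-1\bmod 2d$) gives $p=d(2m-1)+(d-1)$, so Lemma~\ref{splittinglemma} with $a_1=a_2=a_3=p$, $k=2m-1$, $t=d-1$ produces, for every twist, a surjection $\shS_{2m-1}(\,\cdot\,)\oplus\shS_{2m}(\,\cdot\,)\twoheadrightarrow F^*\shE(\,\cdot\,)$, where $\shS_i=\pi^*\Syz_{\PP^1}(X^p,Y^p,(X^d+Y^d)^i)$ splits into line bundles (Remark~\ref{splittingonp1remark}) with splitting type determined by a direct computation of global sections. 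Applying the Lemma to $\shE$ itself ($a_1=a_2=a_3=1$, $k=0$, $t=1$) involves $\shS'_0=\Syz_{\PP^1}(X,Y,1)\cong\O_{\PP^1}(-1)^2$ and $\shS'_1=\Syz_{\PP^1}(X,Y,X^d+Y^d)\cong\O_{\PP^1}(-2)\oplus\O_{\PP^1}(-d)$. Combined with the summandwise injectivity of the $\varphi_m$ (Remark~\ref{summandinjectivity}), these descriptions reduce the desired isomorphism to matching explicit sums of line bundles on $\PP^1$ in a way compatible with the gluing datum on $C$, namely the Fermat relation $Z^d=X^d+Y^d$.

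The core is to construct a nonzero homomorphism $\varphi\colon\shE(-\tfrac{3(p-1)}{2})\to F^*\shE$, i.e.\ a global section of $(\shE\otimes F^*\shE)(3dm)$ with $3dm=\tfrac{3(p+1)}{2}$, built from the syzygies the curve equation supplies: $(X^{d-1},Y^{d-1},-Z^{d-1})\in\Gamma(C,\shE(d))$ and the syzygies of $(X^p,Y^p,Z^p)$ obtained by expanding powers of $X^d+Y^d-Z^d$. The naive map $(r,s,t)\mapsto(rY^{p-1}Z^{p-1},sX^{p-1}Z^{p-1},tX^{p-1}Y^{p-1})$ realizes the wrong twist $3(p-1)$ and its determinant vanishes along $XYZ=0$, so the real work is to use $Z^{p+1}=(X^d+Y^d)^{2m}$ to cancel the surplus and land at the twist $\tfrac{3(p-1)}{2}$. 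Once $\varphi$ is available, its image is a quotient of $\shE(-\tfrac{3(p-1)}{2})$ and a subsheaf of $F^*\shE$, so has slope exactly $-\tfrac{3dp}{2}$; since both bundles are semistable of that slope, $\varphi$ cannot drop rank along a divisor, and checking that $\varphi$ is generically an isomorphism — over the chart $XYZ\neq0$, or via the nonvanishing of $\det\varphi$, or (where $\shE$ is known to be stable, e.g.\ by Langer's restriction theorem, so that $F^*\shE$ is stable too) via stability of both sides — shows $\varphi$ is injective; equality of degrees then forces $\varphi$ to be an isomorphism.

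The main obstacle is the construction of $\varphi$ with the exact twist $\tfrac{3(p-1)}{2}$: every obvious candidate overshoots, and cancelling the excess is possible only under $p\equiv-1\bmod 2d$. This is exactly where Lemma~\ref{splittinglemma} carries the load, reducing the bookkeeping to the splitting types of $\Syz_{\PP^1}(X,Y,X^d+Y^d)$ and of the bundles $\Syz_{\PP^1}(X^p,Y^p,(X^d+Y^d)^i)$; verifying non-degeneracy of $\varphi$ afterwards is a finite computation, most transparently carried out over the affine chart where $XYZ$ is invertible.
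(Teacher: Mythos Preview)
Your proposal identifies the right ingredients (Lemma~\ref{splittinglemma}, the splitting of the $\shS_i$ over $\PP^1$, semistability), but the central step is not carried out: you never actually construct the map $\varphi$. You write that ``the real work is to use $Z^{p+1}=(X^d+Y^d)^{2m}$ to cancel the surplus'' and that Lemma~\ref{splittinglemma} ``carries the load, reducing the bookkeeping to the splitting types'', but you neither compute those splitting types nor explain how knowing them yields a nonzero element of $\Hom(\shE(-\tfrac{3(p-1)}{2}),F^*\shE)$. Since $(\shE\otimes F^*\shE)(\tfrac{3(p+1)}{2})$ has degree~$0$, the existence of such a section is genuinely at stake and cannot be read off from numerics; a construction is required, and your text does not provide one. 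The parenthetical appeal to stability (``so that $F^*\shE$ is stable too'') is also not available: Frobenius pull-back of a stable bundle need not be stable, and Lemma~\ref{stronglysemistablerestriction} only gives semistability of $F^*\shE$.

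The paper avoids writing down $\varphi$ directly. Instead it \emph{computes} the splitting types (Step~1): using a syzygy on $\PP^1$ and semistability of $F^*\shE$ to exclude lower-degree sections, one finds $\shS_k(\tfrac{3p+1}{2}-t)\cong\O_C(-d+2)\oplus\O_C$ and $\shS_{k+1}(\tfrac{3p+1}{2})\cong\O_C^2$. The heart of the argument is then to show that the summand $\O_C(-d+2)$ is superfluous, i.e.\ that already $\O_C^3\twoheadrightarrow F^*\shE(\tfrac{3p+1}{2})$ (Step~3). This requires a pointwise check at the points with $z=0$, which in turn rests on a nonvanishing statement for the third component $H$ of the distinguished section (Step~2); the latter uses the factorization of $X^d+Y^d$ and again semistability to derive a contradiction. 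Once the surjection $\O_C^3\twoheadrightarrow F^*\shE(\tfrac{3p+1}{2})$ is in hand, the kernel is $\O_C(-1)$ by determinants, and dualizing exhibits $(F^*\shE(\tfrac{3p+1}{2}))^\vee(-1)$ as $\Syz(L_1,L_2,L_3)$ for three linear forms; semistability forces $L_1,L_2,L_3$ to be linearly independent, so this bundle equals $\Syz(X,Y,Z)=\shE$ (Steps~4--5). Thus the isomorphism comes not from an explicit $\varphi$ but from recognizing $F^*\shE$ (after twist and duality) as a syzygy bundle of linear forms. Your outline, if completed, would arrive at the same three-generator presentation of $F^*\shE(\tfrac{3p+1}{2})$, but the delicate Steps~2 and~3 (nonvanishing of $H$ on $\{z=0\}$ and the resulting surjectivity from $\O_C^3$) are precisely what is missing.
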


\begin{proof}
The strong semistability of $\shE$ in characteristics $p \equiv -1 \!\! \mod 2d$ has already been proved in Lemma \ref{stronglysemistablerestriction}. So we have to show that $F^*(\shE) \cong \Syz(X^p,Y^p,Z^p) \cong \shE(-\frac{3(p-1)}{2})$. Since the proof is quite long, we divide it into several steps. Note that, since semistability is preserved under base change, we may assume without loss of generality that $K$ is algebraically closed.

\smallskip

\noindent \underline{Step 1.} We write $p = dk + (d-1)$ with $k$ odd. Accordingly, we set $t=d-1$. Further, we follow the notation of Lemma \ref{splittinglemma} and define the bundles
$$\shS_k:=\Syz(X^p,Y^p,(X^d+Y^d)^k),~~\shS_{k+1}:=\Syz(X^p,Y^p,(X^d+Y^d)^{k+1}).$$
We show that the surjective morphism
$$\varphi_{\frac{3p+1}{2}}: \shS_k(\frac{3p+1}{2}-t) \oplus \shS_{k+1}(\frac{3p+1}{2}) \lto \Syz(X^p,Y^p,Z^p)(\frac{3p+1}{2})$$ defined in Lemma \ref{splittinglemma} can be identified with
$$ (\O_C(-d+2) \oplus \O_C) \oplus \O_C^2 \lto \Syz(X^p,Y^p,Z^p)(\frac{3p+1}{2}).$$

\smallskip

We consider the vector bundle $\Syz(U^{k+1},V^{k+1},(U+V)^k)(\frac{3k+1}{2})$ on the projective line $\PP^1 = \Proj K[U,V]$. Since the degree of this bundle is $-1$, it has to have a non-trivial global section. Substituting $U=X^d$ and $V=Y^d$ yields a non-trivial syzygy
$$F X^{dk+d} + G Y^{dk+d} + H (X^d+Y^d)^k = (FX) X^p + (GY) Y^p + H (X^d+Y^d)^k = 0$$
of total degree $\frac{3dk+d}{2}$. That is, we have a non-trivial global section of $\shS_k(\frac{3dk+d}{2})$ on the curve $C$. We have $\Gamma(C,\shS_k(\frac{3dk+d}{2}-1)) = 0$ because otherwise the twisted semistable Frobenius pull-back $\Syz(X^p,Y^p,Z^p)(\frac{3dk+d}{2}+d-2)$ of degree $-d$ would have a non-trivial global section too (see Remark \ref{summandinjectivity}) which is impossible by semistability. Since $\deg(\shS_k(\frac{3dk+d}{2}))= (-d+2)d$, we obtain the splitting (rewrite $\frac{3dk+d}{2}=\frac{3p+1}{2}-(d-1)=\frac{3p+1}{2}-t$)
$$\shS_k(\frac{3p+1}{2}-t) \cong \O_C(-d+2) \oplus \O_C.$$
The other summand $\shS_{k+1}(\frac{3dk+d}{2} + d-1)$ has degree $0$. It follows once again from Lemma \ref{splittinglemma} and the semistability of $\Syz(X^p,Y^p,Z^p)$
that $$\Gamma(C, \shS_{k+1}(\frac{3dk+d}{2} + d-2)) = 0,$$ i.e., $\shS_{k+1}(\frac{3dk+d}{2} + d-1)$ splits as (rewrite $\frac{3dk+d}{2} + d-1 = \frac{3p+1}{2}$)
$$\shS_{k+1}(\frac{3p+1}{2}) \cong \O_C^2.$$

\smallskip

\noindent \underline{Step 2.} Let $(FX,GY,H)$ be the non-trivial global section of $\shS_k(\frac{3p+1}{2}-t)$ constructed above (corresponding to the component $\O_C$). We show that $H(P) \neq 0$ for every point $P=(x,y,z) \in C$ satisfying $z^d=x^d+y^d=0$.

\smallskip

The last component $H$ of the section $(FX,GY,H)$ is a homogeneous polynomial in $X^d$ and $Y^d$ (it stems by construction from a syzygy on $\PP^1$ in $U$ and $V$). Let $P=(x,y,z) \in C$ be a point on the curve such that $z^d=x^d+y^d=0$. Then $x^d=-y^d$ which implies $x= \zeta y$ where $\zeta$ is a $d$th root of $-1$. In particular, $P=(\zeta y,y,0)$. Since $K$ is algebraically closed, $\Char(K) \neq 2$ and $p \equiv -1 \!\! \mod 2d$, the group $\mu_{2d}(K)$ of $(2d)$th roots of unity in $K$ has order $2d$. Hence, we have $$X^d+Y^d = \prod_\zeta (X- \zeta Y),$$ where $\zeta \in \mu_{2d}(K)$ runs through the elements with the property $\zeta^d=-1$ (there are exactly $d$ such roots). Now assume $H(P) = 0$. Then $H(P^\prime)=0$ for all points $P^\prime$ of the form $P^\prime = (\zeta y,y,0)$. So $X^d+Y^d$ has to divide $H$, i.e., $H= \tilde{H} (X^d+Y^d)$ with a homogeneous polynomial $\tilde{H} \in K[X,Y]$. So we have a relation
$$(FX) X^p + (GY) Y^p + \tilde{H} (X^d+Y^d)^{k+1} = 0$$
of total degree $\frac{3p+1}{2}-t$. That is, we have a non-trivial section of the bundle $\shS_{k+1}(\frac{3p+1}{2}-t)$. This section maps by Lemma \ref{splittinglemma} and Remark \ref{summandinjectivity} to a non-trivial global section of
$\Syz(X^p,Y^p,Z^p)(\frac{3p+1}{2}-t)$. But
$$\deg(\Syz(X^p,Y^p,Z^p)(\frac{3p+1}{2}-t))=(3p+1-2t-3p)d=(1-2t)d<0.$$ Hence, the section contradicts the semistability of $\Syz(X^p,Y^p,Z^p)$.

\smallskip

\noindent \underline{Step 3.} We show that in the surjective sheaf homomorphism
$$ \varphi_{\frac{3p+1}{2}}: (\O_C(-d+2) \oplus \O_C) \oplus \O_C^2 \lto \Syz(X^p,Y^p,Z^p)(\frac{3p+1}{2})$$
the summand $\O_C(-d+2)$ is not necessary, i.e.,
$$ \varphi_{\frac{3p+1}{2}}: \O_C^3= \O_C \oplus \O_C^2 \lto \Syz(X^p,Y^p,Z^p)(\frac{3p+1}{2})$$
is also surjective.

\smallskip

Set $m:=\frac{3p+1}{2}$. By the Nakayama lemma, we can check surjectivity pointwise over the residue field $K$ at every point $P=(x,y,z) \in C$. For this we have to find two linearly independent vectors in the image. First we treat the case $z \neq 0$. We show that we even have a surjective map
$$ \shS_{k+1}(m) = \O_C^2 \lto \Syz(X^p,Y^p,Z^p)(m).$$
We take basic sections $$f=(f_1,f_2,f_3), g=(g_1,g_2,g_3) \in \Gamma(C, \shS_{k+1}(m)) \cong \Gamma(C,\O_C^2).$$
Their images are $\tilde{f}=(f_1,f_2,Zf_3)$ and $\tilde{g}=(g_1,g_2,Zg_3)$. Assume there is a relation $\tilde{f}(P) + \lambda \tilde{g}(P) = 0$ with $\lambda \in K^\times$. Looking at each component this gives the equations
\begin{eqnarray*}
f_1(P) + \lambda g_1(P) &=& 0,\\
f_2(P) + \lambda g_2(P)&=& 0,\\
(f_3(P) + \lambda g_3(P)) z &=& 0.
\end{eqnarray*}
But since $z \neq 0$, the latter equation would mean $f_3(P) + \lambda g_3(P)=0$ and therefore we would obtain a relation $f(P) + \lambda g(P) =0$ which contradicts the assumption.

Now we deal with the case $z = 0$, i.e., $P=(x,y,0)$. Let
$$f=(FX,GY,H) \in \Gamma(C, \shS_k(m-t)) \cong \Gamma(C,\O_C(-d+2) \oplus \O_C)$$
be the section (corresponding to $\O_C$ which we have found in step 1. The image of $f$ in the bundle $\Syz(X^p,Y^p,Z^p)(m)$ is the section $(Z^tFX,Z^t GY,H)$. Evaluated at $P$ we obtain the vector $v=(0,0,H(P))$. Since $0 = z^d= x^d + y^d$ we have $H(P) \neq 0$ by step 2. Now we take a section $0 \neq g=(g_1,g_2,g_3) \in
\Gamma(C, \shS_{k+1}(m)) \cong \Gamma(C,\O_C^2)$. The image of $g$ equals $(g_1,g_2,Zg_3)$. Evaluation at $P$ gives  the vector $w=(g_1(P),g_2(P),0)$, where either $g_1(P)$ or $g_2(P)$ is not $0$ (otherwise $g_3(P)$ would be $0$ as well). Hence we have found a vector $w$ such that $v,w$ are linearly independent over $K$.

\smallskip

\noindent \underline{Step 4.} So far we have shown that we have a surjective morphism
$$ \O_C^3 \lto \Syz(X^p,Y^p,Z^p)(\frac{3p+1}{2}) \lto 0.$$
Since $\det(\Syz(X^p,Y^p,Z^p)(\frac{3p+1}{2}))\cong \O_C(1)$ we have a short exact sequence
$$0 \lto \O_C(-1) \lto \O_C^3 \lto \Syz(X^p,Y^p,Z^p)(\frac{3p+1}{2}) \lto 0.$$
Dualizing and tensoring with $\O_C(-1)$ gives
$$0 \lto (\Syz(X^p,Y^p,Z^p)(\frac{3p+1}{2}))^\vee(-1) \lto \O_C^3(-1) \lto \O_C \lto 0,$$
where the map $\O_C^3(-1) \ra \O_C$ is given by some linear forms $L_1,L_2,L_3$ in the homogeneous coordinate ring $R = K[X,Y,Z]/(X^d+Y^d-Z^d)$. In particular, we have $(\Syz(X^p,Y^p,Z^p)(\frac{3p+1}{2}))^\vee(-1) \cong \Syz(L_1,L_2,L_3)$. We show that $\{L_1,L_2,L_3\}$ and $\{X,Y,Z\}$ generate the same ideal in $R$. Assume to the contrary that $L_1,L_2,L_3$ are linearly dependent. Such an equation yields a non-trivial section of $\Syz(L_1,L_2,L_3)(1)$. This bundle has degree $\deg(\Syz(L_1,L_2,L_3)(1))=(2-3)d=-d <0$. But since $\Syz(X^p,Y^p,Z^p)$ is semistable, so is $(\Syz(X^p,Y^p,Z^p)(\frac{3p+1}{2}))^\vee$ and thus also $\Syz(L_1,L_2,L_3)$. So the section contradicts the semistability.

\smallskip

\noindent \underline{Step 5.} We have already proved that
$$\shE \cong \Syz(X,Y,Z) \cong (\Syz(X^p,Y^p,Z^p)(\frac{3p+1}{2}))^\vee(-1).$$ Since $\Syz(X^p,Y^p,Z^p)$ is a bundle of
rank $2$, we have $$\Syz(X^p,Y^p,Z^p) \cong \Syz(X^p,Y^p,Z^p)^\vee \otimes \O_C(-3p).$$
So finally we obtain
\begin{eqnarray*}
\shE &\cong& \Syz(X,Y,Z)\\
&\cong& (\Syz(X^p,Y^p,Z^p)(\frac{3p+1}{2}))^\vee(-1)\\
&\cong& \Syz(X^p,Y^p,Z^p)^\vee \otimes \O_C(-\frac{3p+1}{2}) \otimes \O_C(-1)\\
&\cong& \Syz(X^p,Y^p,Z^p) \otimes \O_C(3p) \otimes \O_C(-\frac{3p+1}{2}) \otimes \O_C(-1)\\
&\cong& \Syz(X^p,Y^p,Z^p)(\frac{3(p-1)}{2}),
\end{eqnarray*}
and consequently $F^*(\shE) \cong \Syz(X^p,Y^p,Z^p) \cong \shE(-\frac{3(p-1)}{2})$ which finishes the proof.
\end{proof}

\begin{remark}
We also comment on the case $p \equiv 1 \!\! \mod 2d$. Then we write $p = dk +1$ with $k$ even and set $t=1$. The syzygy bundle
$\Syz(U^k,V^k,(U+V)^{k+1})(\frac{3k}{2})$ on $\PP^1=\Proj K[U,V]$ has degree $-1$ and therefore has to have a non-trivial global section. Substituting $U=X^d$ and $Y^d$ and multiplying with $XY$ gives then a syzygy
$$(FY) X^p +(GX) Y^p + (HXY)(X^d +Y^d)^{k+1}=0,$$
i.e., a global section of $\shS_{k+1}(\frac{3dk}{2}+2)$ on the Fermat curve. As in the proof of Theorem \ref{cotangentfermatrepetition} we obtain the splittings (rewrite $\frac{3dk}{2}+2 = \frac{3p+1}{2}$)
$$ \shS_{k+1}(\frac{3p+1}{2}) \cong \O_C(-d+2) \oplus \O_C \mbox{ and } \shS_k(\frac{3p+1}{2}-t) \cong \O_C^2.$$
Unfortunately, we do not know how to prove an analog of step 2 in these characteristics, i.e., to show that $H(P) \neq 0$ for every point $P=(x,y,z) \in C$ with $z^d= x^d+y^d =0$. Here the reasoning of the proof (step 2) of Theorem \ref{cotangentfermatrepetition} would lead to a section of $\Syz(X^p,Y^p,(X^d+Y^d)^{k+2})$ which is not helpful to get a contradiction.
\end{remark}

\begin{remark}
\label{periodicityinchartwo}
We cannot expect that Theorem \ref{cotangentfermatrepetition} holds in every characteristic $p$ where $\Omega_{\PP^2}|_C$
is strongly semistable. For example, consider in characteristic $2$ the Fermat cubic $C=\Proj(K[X,Y,Z]/(X^3+Y^3-Z^3))$, which is an elliptic curve. It is a well-known fact that semistable vector bundles on elliptic curves are strongly semistable (see for instance \cite[appendix]{tusemistable}). Hence
$\Omega_{\PP^2}|_C \cong \Syz(X,Y,Z)$ is strongly semistable by \cite[Proposition 6.2]{brennercomputationtight}. The pull-back $F^*(\Omega_{\PP^2}|_C) \cong \Syz(X^2,Y^2,Z^2)$ has for the first time non-trivial global sections in total degree $3$, namely the (only) syzygy $(X,Y,-Z)$ which comes from the equation of the curve. This section gives rise to the short exact sequence
$$0 \lto \O_C \lto \Syz(X^2,Y^2,Z^2)(3) \lto \O_C \lto 0,$$
i.e., $\Syz(X^2,Y^2,Z^2)(3)$ is the bundle $F_2$ in Atiyah's classification \cite{atiyahelliptic}. Since the Hasse invariant of $C$ is $0$, we have $F^*(F_2) \cong \O_C^2$ and therefore $F^*(\Omega_{\PP^2}|_C) \not \cong \Omega_{\PP^2}|_C(-\frac{3(p-1)}{2})$. We have
${F^2}^*(\Omega_{\PP_2}|_C) \cong \O_C(-6) \oplus \O_C(-6)$ and we obtain (up to a twist) the periodicity ${F^3}^*(\Omega_{\PP^2}|_C) \cong ({F^2}^*(\Omega_{\PP^2}|_C))(-6)$.
\end{remark}

\section{A computation of the Hilbert-Kunz function}

We recall that the \emph{Hilbert-Kunz function} of a standard graded ring $R$ of characteristic $p>0$ with graded maximal ideal $\fom$ is the function
$$e \longmapsto \varphi_R(e):=\length (R/\fom^{[p^e]}),$$
where $\fom^{[p^e]}$ denotes the extended ideal under the $e$-th iteration of the Frobenius endomorphism on $R$; see for instance \cite{monskyhilbertkunz} for this rather complicated function and its properties. As a consequence of Theorem \ref{cotangentfermatrepetition} we obtain the complete Hilbert-Kunz function of the Fermat ring $R=K[X,Y,Z]/(X^d+Y^d-Z^d)$ in characteristics $p \equiv -1 \!\!\mod 2d$. The following result is implicitly contained in \cite[Lemma 5.6]{hanmonsky} of P. Monsky and C. Han.

\begin{corollary}
\label{hkfunctionfermatperiodicity}
Let $d \geq 2$ be a positive integer and let $K$ be a field of characteristic $p \equiv -1 \mod 2d$. Then the Hilbert-Kunz function of the Fermat ring $R=K[X,Y,Z]/(X^d+Y^d-Z^d)$ is
$$\varphi_R(e) = \frac{3d}{4}p^{2e} + 1-\frac{3d}{4}.$$
\end{corollary}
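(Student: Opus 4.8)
The plan is to translate the sheaf-theoretic periodicity of Theorem~\ref{cotangentfermatrepetition} into the asymptotic behaviour of the Hilbert--Kunz function via the geometric dictionary of \cite{brennerhilbertkunz} and \cite{trivedihilbertkunz}. Recall that for a two-dimensional standard-graded domain $R=K[X,Y,Z]/(X^d+Y^d-Z^d)$ with projective curve $C=\Proj R$, the Hilbert--Kunz function is controlled by the global sections of the Frobenius pull-backs of the syzygy bundle $\Syz(X,Y,Z)$ (equivalently $\Omega_{\PP^2}|_C$) twisted by $\O_C(m)$: writing $q=p^e$, one has $\varphi_R(e) = \sum_{m} h^0(C,\Syz(X^q,Y^q,Z^q)(m)) + (\text{correction terms})$, where the exact bookkeeping comes from the presenting sequence $0\to\Syz(X^q,Y^q,Z^q)\to\bigoplus_{i=1}^3\O_C(-q)\to\O_C\to 0$ and the comparison of $R/\fom^{[q]}$ with the sections of $\O_C(m)/(X^q,Y^q,Z^q)$.

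First I would use Theorem~\ref{cotangentfermatrepetition}, which gives $F^*(\shE)\cong\shE(-\tfrac{3(p-1)}{2})$ for $\shE=\Syz(X,Y,Z)$, to get the closed-form description of all iterated Frobenius pull-backs: iterating the periodicity yields $F^{e*}(\shE)\cong\shE\!\left(-\tfrac{3(p^e-1)}{2}\right)$, hence $\Syz(X^{q},Y^{q},Z^{q})\cong\Syz(X,Y,Z)\!\left(\tfrac{3(q-1)}{2}\right)$ for $q=p^e$. This reduces the computation of $h^0(C,\Syz(X^q,Y^q,Z^q)(m))$ to the single bundle $\Syz(X,Y,Z)$, whose cohomology (and that of its twists) is explicitly known: its degree is $-3d$, its slope is $-\tfrac{3d}{2}$, and since it is semistable on a curve of genus $g=\binom{d-1}{2}$ one computes $h^0(C,\Syz(X,Y,Z)(m))$ for each $m$ by Riemann--Roch together with semistability vanishing (sections vanish whenever the twisted degree is negative, and for the middle range one uses the explicit syzygies coming from monomial relations on $C$). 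Then I would sum over $m$ the resulting numbers, weighted as prescribed by the Hilbert--Kunz dictionary, the leading term reproducing $e_{HK}(R)\,q^2 = \tfrac{3d}{4}q^2$ (consistent with Monsky's value used in Lemma~\ref{stronglysemistablerestriction}) and the lower-order contributions being the constant $1-\tfrac{3d}{4}$.

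Concretely, the cleanest route is: (i) express $\varphi_R(e)$ exactly as $\dim_K(R/\fom^{[q]})=\sum_{m\geq 0}\dim_K(R_m/(X^q,Y^q,Z^q)_m)$; (ii) for each $m$ relate $\dim_K(R_m/(X^q,Y^q,Z^q)_m)$ to $\binom{m+2}{2}$ minus $3\binom{m-q+2}{2}$ plus $3\binom{m-2q+2}{2}$ minus $\binom{m-3q+2}{2}$ (inclusion--exclusion on the Koszul-type complex) plus the cohomological defect $h^1(C,\Syz(X^q,Y^q,Z^q)(m))$, which by Serre duality and the above isomorphism equals $h^0(C,\shE^\vee(\tfrac{3(q-1)}{2}-m)\otimes\omega_C)$ and is therefore again controlled by $\Syz(X,Y,Z)$; (iii) carry out the finite sum. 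The main obstacle I anticipate is step (ii)--(iii): getting the bookkeeping of the cohomological defects right so that the contributions telescope into the single constant $1-\tfrac{3d}{4}$ rather than a $q$-dependent remainder --- this is exactly where one must invoke that the defect is \emph{eventually periodic} (indeed constant after the twist) as a consequence of the Frobenius periodicity, and match it against the known value $e_{HK}(R)=\tfrac{3d}{4}$ to pin down the additive constant. An alternative that sidesteps much of this arithmetic is to cite \cite[Lemma~5.6]{hanmonsky} of Han and Monsky, where (as noted) the formula is implicitly present; the geometric argument above then serves to make that implicit computation transparent and self-contained given Theorem~\ref{cotangentfermatrepetition}.
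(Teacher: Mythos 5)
Your overall strategy is the paper's: reduce to the geometric formula for the Hilbert--Kunz function in terms of global sections of $\Syz(X^q,Y^q,Z^q)(m)$ on $C$, substitute the periodicity $\Syz(X^q,Y^q,Z^q)\cong\Syz(X,Y,Z)(-\tfrac{3(q-1)}{2})$ obtained by iterating Theorem \ref{cotangentfermatrepetition}, and finish with Riemann--Roch. (Incidentally, your ``hence'' line has a sign slip: the twist is $-\tfrac{3(q-1)}{2}$, not $+\tfrac{3(q-1)}{2}$.) But the execution has a genuine gap, and it sits exactly where you flag your ``main obstacle''. First, the degreewise identity you propose in (ii) is not correct as stated: the alternating sum of binomials $\binom{m+2}{2}-3\binom{m-q+2}{2}+3\binom{m-2q+2}{2}-\binom{m-3q+2}{2}$ is the Koszul count over the polynomial ring $K[X,Y,Z]$, not over the hypersurface ring $R$; over $R$ the exact identity (no $h^1$-defect needed) is
$$\dim_K(R/\fom^{[q]})_m \;=\; h^0(C,\O_C(m))-3h^0(C,\O_C(m-q))+h^0(C,\Syz(X^q,Y^q,Z^q)(m)),$$
which is pure linear algebra from the presenting sequence, using that $\Gamma(C,\O_C(m))=R_m$ (the hypersurface is projectively normal). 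Second, and more seriously, you never actually compute $h^0(C,\shE(m))$ in the critical range: semistability only gives vanishing when the twisted degree is negative, ``explicit syzygies from monomial relations'' give lower bounds rather than values, and matching against $e_{HK}(R)=\tfrac{3d}{4}$ pins down only the leading coefficient of $\varphi_R(e)$, not the constant term $1-\tfrac{3d}{4}$, which is precisely what the corollary asserts beyond Monsky's multiplicity.

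The paper closes this gap with a simple device you are missing: evaluating the presenting sequence of $\shE(k)=\Syz(X,Y,Z)(k)$ on global sections and using that multiplication $(X,Y,Z)\colon R_{k-1}^3\to R_k$ is surjective for $k\geq 1$ (standard gradedness), one gets the identity
$$h^0(C,\shE(k))=3h^0(C,\O_C(k-1))-h^0(C,\O_C(k))+\delta_{k,0}$$
valid for \emph{every} $k$, with no case analysis on the middle range and no semistability input at this stage. Substituting this together with the periodicity into the displayed formula turns $\varphi_R(e)$ into an alternating sum of the numbers $h^0(C,\O_C(j))$ alone; the sums telescope, and the finitely many surviving terms lie in the range $k\geq \frac{p+1}{2}\geq d-2$ where $h^0(C,\O_C(k))=dk-g+1$, so the genus cancels and the constant $1-\tfrac{3d}{4}$ comes out of the arithmetic. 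Your fallback of citing Han--Monsky is legitimate as a cross-check (the paper notes the formula is implicit there), but it does not substitute for carrying out this bookkeeping if the corollary is to be derived from Theorem \ref{cotangentfermatrepetition}.
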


\begin{proof}
Since the length of $R/\fom^{[p^e]}$, $\fom=(X,Y,Z)$, does not change if one enlarges the base field, we may assume that $K$ is algebraically closed. Hence, $\varphi_R(e)=\sum_{m=0}^\infty \dim_K(R/\fom^{[p^e]})_m$ (this sum is in fact finite since the algebras $R/\fom^{[p^e]}$ have finite length).
It follows from the presenting sequence of $\Syz(X,Y,Z)$ on the Fermat curve $C= \Proj R$ that (setting $q:=p^e$)
\begin{eqnarray*}
(*) \qquad \dim_K(R/\fom^{[q]})_m &=& h^0(C,\O_C(m))-3 h^0(C,\O_C(m-q))\\&\mbox{}& + h^0(C,\Syz(X^q,Y^q,Z^q)(m)).
\end{eqnarray*}
By Theorem \ref{cotangentfermatrepetition} we have $\Syz(X^p,Y^p,Z^p) \cong \Syz(X,Y,Z)(-\frac{3(p-1)}{2})$ and consequently
$\Syz(X^q,Y^q,Z^q)\cong \Syz(X,Y,Z)(-\frac{3(q-1)}{2})$ for all $q=p^e$, $e \geq 1$. The global evaluation of the presenting sequence of $\shE(k):=\Syz(X,Y,Z)(k)$ gives the exact sequence
$$0 \lto \Gamma(C,\shE(k)) \lto \Gamma(C,\O_C(k-1)^3) \lto \Gamma(C,\O_C(k)) \lto K \lto 0$$
for $k=0$ and the short exact sequence
$$0 \lto \Gamma(C,\shE(k)) \lto \Gamma(C,\O_C(k-1)^3) \lto \Gamma(C,\O_C(k)) \lto 0$$
for $k \geq 1$. Hence we obtain
\[h^0(C,\shE(k))=\begin{cases} 3h^0(C,\O_C(k-1))-h^0(C,\O_C(k))+1 \mbox{ if } k=0,\\
3h^0(C,\O_C(k-1))-h^0(C,\O_C(k)) \mbox{ if } k \neq 0.
\end{cases} \]
For $k \geq d-2$ we have by Riemann-Roch $h^0(C,\O_C(k))=dk-g+1$, where $g$ is the genus of the curve. Since $p \equiv -1 \!\! \mod 2d$, this holds in particular for $k \geq \frac{p+1}{2}$. So the geometric formula for the Hilbert-Kunz function $(*)$ gives (in order to obtain an easier calculation we sum up to $2q$):
\begin{eqnarray*}
\varphi_R(e) &=& \sum_{m=0}^{2q}h^0(C,\O_C(m))-3 \sum_{m=0}^{2q} h^0(C,\O_C(m-q))\\
&\mbox{}& + 3\sum_{m=0}^{2q} h^0(C,\O_C(m-\frac{3(q-1)}{2}-1))\\
&\mbox{}& - \sum_{m=0}^{2q} h^0(C,\O_C(m-\frac{3(q-1)}{2})) + 1\\
&=& \sum_{m=0}^{2q}h^0(C,\O_C(m))-3 \sum_{m=0}^{q} h^0(C,\O_C(m))\\
&\mbox{}& + 3\sum_{m=0}^{\frac{q+1}{2}} h^0(C,\O_C(m))-\sum_{m=0}^{\frac{q+3}{2}} h^0(C,\O_C(m)) + 1\\
&=& \sum_{m=\frac{q+5}{2}}^{2q} h^0(C,\O_C(m))-3 \sum_{m=\frac{q+3}{2}}^{q} h^0(C,\O_C(m)) + 1\\
&=& \sum_{m=\frac{q+5}{2}}^{2q} (dm-g+1)-3 \sum_{m=\frac{q+3}{2}}^{q} (dm-g+1) + 1 \\
&=&  d\left(q(2q+1)-\frac{(q+3)(q+5)}{8}\right)-\frac{3g(q-1)}{2} + \frac{3(q-1)}{2}\\
&\mbox{}&-3d\left(\frac{q(q+1)}{2}-\frac{(q+1)(q+3)}{8}\right)+\frac{3g(q-1)}{2}-\frac{3(q-1)}{2} +1\\
&=&\frac{3d}{4}q^2 + 1 - \frac{3d}{4}.
\end{eqnarray*}
Thus we have obtained the desired formula for the Hilbert-Kunz function of the ring $R$.
\end{proof}

\begin{remark}
Corollary \ref{hkfunctionfermatperiodicity} matches for $d=3$ with the result \cite[Theorem 4]{buchweitzchenhilbertkunz} of Buchweitz and Chen, which says that the Hilbert-Kunz function of the homogeneous coordinate ring of a plane elliptic curve defined over a field $K$ of odd characteristic $p$ equals $\frac{9}{4}p^{2e}-\frac{5}{4}$.
\end{remark}

\section{Examples of a (0,1)-Frobenius periodicity on Fermat curves}

In this section, we show how to get via Theorem \ref{cotangentfermatrepetition} non-trivial examples of $(0,1)$-Frobenius periodicities, i.e., we give explicit examples of vector bundles $\shE$ on certain Fermat curves such that $\shE \cong F^*(\shE)$.

\begin{example}
\label{exampleexplicitperiodicity}
Let $d \geq 2$ and let $K$ be a field of characteristic $p \equiv -1 \!\! \mod 2d$. The ring homomorphism
$$K[X,Y,Z]/(X^d+Y^d-Z^d) \lto K[U,V,W]/(U^{2d}+V^{2d}-W^{2d})$$
which sends $X \mapsto U^2$, $Y \mapsto V^2$ and $Z \mapsto W^2$ induces a finite cover
$f: C^{2d} \ra C^d$, where $C^i$ denotes the Fermat curve of degree $i$. Since $f^*(\O_{C^d}(1)) \cong \O_{C^{2d}}(2)$, we see that $\deg(f)=4$. The group ${\mathbb Z}/(2) \times {\mathbb Z}/(2)$ acts on $C^{2d}$ by sending $(u,v,w)$ either to $(u,v,w), (-u,v,w), (u,-v,w)$ or $(u,v,-w)$, and $C^d$ is the quotient of this action. Moreover, $f$ is a finite separable morphism and therefore $f$ preserves semistability. Theorem \ref{cotangentfermatrepetition} gives, via pull-back under $f$,
the isomorphic vector bundles
\begin{eqnarray*}
\Syz_{C^{2d}}(U^{2p},V^{2p},W^{2p}) &\cong& f^*(\Syz_{C^{d}}(X^p,Y^p,Z^p))\\
&\cong& f^*(\Syz_{C^{d}}(X,Y,Z)(-\frac{3(p-1)}{2}))\\
&\cong& f^*(\Syz_{C^{d}}(X,Y,Z)) \otimes f^*(\O_{C^{d}}(-\frac{3(p-1)}{2}))\\
&\cong& \Syz_{C^{2d}}(U^{2},V^{2},W^{2})(-3(p-1))
\end{eqnarray*}
on the Fermat curve $C^{2d}$. In particular, we have the periodicity
$$\Syz_{C^{2d}}(U^{2},V^{2},W^{2})(3) \cong F^*(\Syz_{C^{2d}}(U^{2},V^{2},W^{2})(3)) \, .$$
Note that this bundle has degree $0$ and is not trivial since there are no non-trivial global sections below the degree of the curve.
For $d \geq 4$ this bundle is stable, see Remark \ref{Verschiebung} below.
\end{example}

\begin{remark}
\label{etaletrivialization}
By the classical result \cite[Satz 1.4]{langestuhler} of H. Lange and U. Stuhler the periodicity
$\Syz_{C^{2d}}(U^{2},V^{2},W^{2})(3) \cong F^*(\Syz_{C^{2d}}(U^{2},V^{2},W^{2})(3))$ in Example \ref{exampleexplicitperiodicity} (in certain prime characteristics) implies the existence of an \'{e}tale cover $$g:D \lto C^{2d}$$ such that
$$g^*(\Syz_{C^{2d}}(U^{2},V^{2},W^{2})(3)) \cong \O_D^2.$$
Moreover, by \cite[Proposition 1.2]{langestuhler} the bundle $\Syz_{C^{2d}}(U^{2},V^{2},W^{2})(3)$ comes from a (continuous) representation $$\rho: \pi_1^{\et}(C^{2d}) \lto GL_2(K)$$ of the algebraic fundamental group $\pi_1^{\et}(C^{2d})$.
It would be interesting to see how the \'{e}tale trivialization $g$ and the representation $\rho$ look explicitly in this example.
For $2d=p+1$, $p$ an odd prime number, the trivialization was computed in \cite{staeblerexplicit}.
\end{remark}

\begin{remark}
In this remark we show that $\shE:=\Syz(U^2,V^2,W^2)(3)$ is not \'{e}tale trivializable in characteristic $0$.
We consider this bundle on the smooth projective relative curve
$$\shC^{2d}:=\Proj(\ZZ_{2d}[U,V,W]/(U^{2d}+V^{2d}-W^{2d})) \lto \Spec \ZZ_{2d}.$$ For a prime number $p \not| 2d$ the special fiber
$\shC^{2d}_p$ over $(p)$ is the (smooth) Fermat curve over the finite field $\FF_p$. The generic fiber $\shC^{2d}_0$ over $(0)$ is the Fermat curve
over $\QQ$. To prove that $\shE_0:=\shE|_{\shC^{2d}_0}$ is not \'{e}tale trivializable on $\shC^{2d}_0$ for $d \geq 4$ we use
\cite[Corollary 2]{brennermiyaoka}. By this result the bundle  $\shE^{2d}_0$ is semistable on  $\shC^{2d}_0$ in characteristic $0$,
but $\shE_p:=\shE|_{\shC^{2d}_p}$ is not strongly semistable in prime
characteristics $ p \equiv d - 1 \!\! \mod 2d$.
Note that by the well-known theorem of Dirichlet (cf. \cite[Chapitre VI, \S 4, Th\'{e}or\`{e}me and Corollaire]{serrearithmetic})
there are infinitely many such fibers. Now if $\shE_0 $ were \'{e}tale trivializable,
then we could extend the data which constitute the \'{e}tale cover and the trivialization over the generic point to an open subset of $\Spec \ZZ$.
But then for almost all prime numbers $p$ the bundle $\shE_p$ were  \'{e}tale trivializable and hence strongly semistable, since semistability descends
from a finite cover. Therefore, there is no \'{e}tale cover $g:D \ra \shC_0^{2d}$ such that $g^*(\shE_0) \cong \O_D^2$.

This observation is somehow related to the Grothendieck-Katz $p$-curvature conjecture \cite[(I quat)]{katzdifferential} which states the following: Let $R$ be a $\ZZ$-domain of finite type, $\ZZ \subseteq R$, and $\shX \ra \Spec R$ a smooth projective morphism of
relative dimension $d\geq 1$. If $\shE$ is a vector bundle on $\shX \ra \Spec R$ equipped with an integrable connection $\nabla$ such that $\nabla|_{\shE_\fop}$ has $p$-curvature $0$ on the special fiber $\shX_{\fop}$ for almost all closed points $\fop \in \Spec R$, then  there exists an \'{e}tale cover $g: Y \ra \shX_0$ of the generic fiber $\shX_0$ such that $(g^*(\shE_0),g^*(\nabla_0))$ is trivial. For a detailed account on integrable connections and $p$-curvature see \cite{katznilpotent} and \cite{katzdifferential}.
In our example of the relative Fermat curve $\shC^{2d}$, we have for infinitely many prime numbers $p \equiv -1 \!\! \mod 2d$ the Frobenius descent $F^*(\shE_p) \cong \shE_p$ on $\shC^{2d}_p$. By the so-called Cartier-correspondence
\cite[Theorem 5.1]{katznilpotent} this is equivalent to the existence of an integrable connection $\nabla_p$ on $\shE_p$ with
vanishing $p$-curvature. If one could establish a connection on $\shE$ (since $\shE$ is a vector bundle over a curve, this connection would be automatically integrable) which is compatible with the connections on the special fibers $\shC^{2d}_p$, $p \equiv -1 \mod 2d$, 
then our example would show that the Grothendieck-Katz conjecture does not hold if one only requires vanishing $p$-curvature for infinitely many closed points.
\end{remark}

\begin{remark}
\label{Verschiebung}
In this remark we assume that the base field is algebraically closed. We consider the \emph{Verschiebung}
$$V: \shM_{C^{2d}}(2,\O_{C^{2d}}) \dashrightarrow \shM_{C^{2d}}(2,\O_{C^{2d}}),~[\shE] \longmapsto [F^*(\shE)]$$
induced by the Frobenius morphism on $C^{2d}$. We recall that the Verschiebung is a rational map from the moduli space $\shM_{C^{2d}}(2,\O_{C^{2d}})$ parametrizing (up to $S$-equivalence) semi\-stable vector bundles on $C^{2d}$ of rank $2$ and trivial determinant to itself. The vector bundle $\shS:=\Syz_{\PP^2}(U^2,V^2,W^2)$ is stable on the projective plane $\PP^2=\Proj K[U,V,W]$ by \cite[Corollary 6.4]{brennerlookingstable}. Since the discriminant of this bundle equals $\Delta(\shS)=4c_2(\shS)-c_1(\shS)^2=12$, the restriction of this bundle to every smooth projective curve of degree $\geq 7$ remains stable by Langer's restriction theorem \cite[Theorem 2.19]{langersurvey}. In particular, $\shS|_{C^{2d}} \cong \Syz_{C^{2d}}(U^{2},V^{2},W^{2})(3)$ is stable on the Fermat curve $C^{2d}$ for $d \geq 4$. Hence, for $d \geq 4$ the bundle $\Syz_{C^{2d}}(U^{2},V^{2},W^{2})(3)$ defines a closed point of $\shM_{C^{2d}}(2,\O_C)$ which is fixed under the Verschiebung $V$.
\end{remark}

\begin{remark}
We may pull-back the vector bundle $\shE = \Syz(U^2,V^2,W^2)(3)$ along the cone mapping \[p:T= \Spec K[U,V,W]/(U^{2d}+V^{2d} - W^{2d}) \setminus \{ \fom \} \rightarrow C^{2d}\] to obtain the bundle $\shG=p^*(\shE)$ on the punctured spectrum with the property $F^*(\shG) \cong \shG$. This can however not be extended to get a Frobenius periodicity on the module level, since $F^* \Gamma(T, \shG) \neq \Gamma (T, F^* \shG)$. A Frobenius periodicity for a coherent $R$-module $M$, where $R$ is a local noetherian domain, implies that $M$ is free. This observation follows by looking at Fitting ideals of a free resolution (we thank Manuel Blickle and Neil Epstein for this remark).
\end{remark}

\bibliographystyle{amsplain}

\bibliography{bibliothek}



\end{document}